\let\phi\varphi
\let\epsilon\varepsilon
\let\leq\leqslant
\let\geq\geqslant
\setlist{\upshape (i), wide, labelindent=.5em, itemsep=.25em}
\theoremstyle{plain}
\newtheorem{theorem}{Theorem}[section]
\newtheorem{proposition}[theorem]{Proposition}
\newtheorem{lemma}[theorem]{Lemma}
\newtheorem{corollary}[theorem]{Corollary}
\theoremstyle{definition}
\theoremstyle{remark}
\newtheorem{remarks}[theorem]{Remarks}
\newcommand{\rointerval}[1]{\left[ #1 \right[}	
\DeclarePairedDelimiterX{\norm}[1]{\lVert}{\rVert}{\ifblank{#1}{\:\cdot\:}{#1}}
\DeclarePairedDelimiterX{\abs}[1]{\lvert}{\rvert}{\ifblank{#1}{\:\cdot\:}{#1}}
	\newcommand{\ie}{i.e.,\xspace}
	\renewcommand{\L}[1]{\mathcal{L}(#1)}} 	
\newcommand*{\ds}{\mathop{}\mathrm{d}{s}}	
\newcommand*{\iu}{\mathrm{i}}	
\newcommand*{\eu}{\mathrm{e}}	
\let\Re\relax
\DeclareMathOperator{\Re}{Re}	
\newcommand{\N}{\mathbb{N}}		
\newcommand{\R}{\mathbb{R}}		
\newcommand{\C}{\mathbb{C}}		
\newcommand{\Sp}{\sigma}	
\newcommand{\sA}{\mathrm{s}(A)} 
\newcommand{\Res}{\rho}	
\newcommand{\RR}{\mathcal{R}}
\newcommand{\CA}{C$^{\star}$}	
\newcommand{\WA}{W$^{\star}$}	
\newcommand{\BA}{\mathfrak{A}}	
\newcommand{\SA}{\mathbf{S}(\mathfrak{A})}	
\newcommand{\1}{\mathbbm{1}}
\title[Spectral Properties of 
Positive Semigroups]{Spectral Properties of 
$ C_{{0}}$-Semigroups of Positive Operators on \CA-Algebras} 
\date{\today}
\author{Ulrich Groh}
\address{%
Mathematisches Institut \\
Universit\"at T\"ubingen \\
	Auf der Morgenstelle 10 \\
	72076 T\"ubingen 
	(Germany)
	}
\email{ulgr@math.uni-tuebingen.de}
\begin{document}
\thispagestyle{empty}
\begin{abstract}
Let $ (T(t))_{t\geq0} $ be a positive $C_{0}$-semigroup with generator $A$ on a \CA-algebra or on the predual of a \WA-algebra.
Then the growth bound $\omega_{0}$ equals $\sA$ and if $\Sp(A) \neq \emptyset$, then $\sA$, the spectral bound of $A$, is a spectral value.
\end{abstract}
\maketitle

\section{Introduction}
This short note arose in parallel with the preparation of a new edition of our lecture notes \enquote{One-parameter Semigroups of Positive Operators} \cite{nagel-ln:2025}, originally published by Springer in 1986.
Chapters D-I through D-IV, written in 1983, contain foundational results on the spectrum of generators of positive $C_{0}$-semigroups on \CA-algebras and the preduals of \WA-algebras.
Here we provide a more detailed treatment of the \enquote{$\sA = \omega_{0}$} problem, mentioned in the chapters but not extensively discussed.

One can clarify this problem using the theory of ordered Banach spaces: 
By \cite[Theorem 5.3.1]{zbmath03736445} one has $s(A) \in \sigma(A)$ whenever $A$ generates a positive semigroup on an ordered Banach space with normal cone and if the spectrum of $A$ is non-empty. 
Moreover, $s(A) = \omega_{0}$ if the norm is additive on the dual positive cone, see \cite[Theorem 2.4.4]{zbmath03883065}.

\CA-algebras enjoy both properties.
However, it is helpful to provide a summary in the context of \CA-algebras and preduals of \WA-algebras, as this allows for a more direct treatment of this topic.
\subsection*{What is the question?}
If $(T(t))_{t\geq0}$ is a strongly continuous operator semigroup on a Banach space, or, for short, a $C_{0}$-semigroup, with generator $A$, then we are interested in the relationship between properties of the spectrum $\Sp(A)$ of the generator and the convergence behavior of the semigroup $(T(t))_{t\geq0}$ (see \cite[A-IV \& B-IV]{nagel-ln:2025}).

To this end, two essential definitions are required (see \cite[Chap. A-III, 1. Introduction (1.2)--(1.3) ]{nagel-ln:2025}).
\begin{enumerate}
\item
First, the \emph{spectral bound} $\sA$, \ie
\[
	\sA = \sup \{ \Re(\lambda) \colon \lambda \in \Sp(A) \}
\]
where $ \Re(\cdot)$ denotes the real part of a complex number.
Then we always have
\[
	- \infty \leq \sA < \infty
\]
with $- \infty = \sA$ when the spectrum of $A$ is empty.
\item
The other quantity is the \emph{growth bound} $\omega_{0}$ which yields
\[
	r(T(t)) = \eu^{\omega_{0} t} \quad (t \geq 0).
\]
where $r(T(t))$ denotes the spectral radius of the operator $T(t)$.
\end{enumerate}
Therefore,  for $ \lambda \in \C$ with $ \Re(\lambda) > \omega_{0}$, the resolvent $\RR(\lambda,A)$ of $A$ exists, is given by
\[
	\RR( \lambda , A ) x = \int_{0}^{\infty} \eu^{ - \lambda s} T(s) x \ds
\]
and 
\[
	- \infty \leq \sA \leq \omega_{0} < \infty ,
\]
where all variations are possible (for this see Remarks~\ref{ex:remarks} on page~\pageref{ex:remarks}).
In the case that $ \sA = \omega_{0}$, we say that the spectrum of $A$ determines the asymptotic behavior of the semigroup $(T(t))_{t\geq0}$.
In general, one cannot expect this equality to hold (see \cite[A-III, Example 1.3]{nagel-ln:2025}) not even for positive $C_{0}$-semigroups.
\subsection*{Terminology}
We use terminology and notation from \emph{One-parameter Semigroups of Positive Operators} \cite{nagel-ln:2025}, Chapter A.
Results on positive semigroups on commutative \CA-algebras can be found in \cite[B-II \& B-III]{nagel-ln:2025}.
Our main reference for the theory of \CA-algebras is \cite{pedersen:2018}, for \WA-algebras \cite{sakai:1971}, and for positive operators on operator algebras \cite{stoermer:2013}.

\section{Positive Operators}%
If $T$ is an operator on a Banach space, we denote by $\Sp(T)$ its spectrum, by $r(T)$ the spectral radius and by $\Res(T)$ its resolvent set. 
If $\lambda \in \Res(T)$ with $\abs{\lambda} > r(T)$ then its resolvent $\RR(\lambda,T)$ is given by 
$\RR(\lambda,T) = \sum_{n=0}^{\infty} \lambda^{-(n+1)}T^{n}$
where the sum exists in the operator norm.
For these and further properties of the spectrum see \cite[Chap. VII]{conway:1990}.
In this section we show the fundamental property of the spectrum of positive operators on \CA-algebras needed later, \ie $r(T) \in \Sp(T)$.
\begin{lemma}
Let $T$ be a positive operator on a \CA-algebra $\BA$, let $\lambda \in \Res(T)$ such that $\abs{\lambda} > r(T)$.
Then for all $ 0 \leq \phi \in \BA^{*}$ and $ 0 \leq x \in \BA$, we have
\begin{equation}\label{eq:resolvent-inequality}
		\abs*{ \phi(\RR( \lambda, T )x) }
		  \leq \phi(\RR( \abs{\lambda}, T)x )  .
\end{equation}
\end{lemma}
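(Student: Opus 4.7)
The plan is to exploit the Neumann series representation of the resolvent, which is available precisely because we are in the regime $\abs{\lambda} > r(T)$, together with the positivity of $T$, $\phi$ and $x$.

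First I would write, using the preceding paragraph,
\[
	\RR(\lambda,T)x = \sum_{n=0}^{\infty} \lambda^{-(n+1)} T^{n} x,
\]
where the series converges in norm in $\BA$. Applying the continuous functional $\phi$ and pulling it inside the sum, we obtain
\[
	\phi(\RR(\lambda,T)x) = \sum_{n=0}^{\infty} \lambda^{-(n+1)} \phi(T^{n} x).
\]
The positivity assumptions now enter: since $T$ is positive and $x \geq 0$, each iterate $T^{n} x$ is positive in $\BA$; since $\phi \geq 0$, each scalar $\phi(T^{n} x)$ is a non-negative real number.

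Next I would apply the triangle inequality to the complex series. Because the coefficients $\phi(T^{n}x)$ are non-negative reals, the absolute values collapse cleanly:
\[
	\abs*{\phi(\RR(\lambda,T)x)}
	\leq \sum_{n=0}^{\infty} \abs{\lambda}^{-(n+1)} \phi(T^{n} x).
\]
The right-hand side is precisely the Neumann series for $\RR(\abs{\lambda},T)$ applied to $x$ and evaluated at $\phi$, which converges because $\abs{\lambda} > r(T)$ also ensures $\abs{\lambda} \in \Res(T)$. Re-summing identifies the right-hand side with $\phi(\RR(\abs{\lambda},T)x)$, giving \eqref{eq:resolvent-inequality}.

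There is no real obstacle here; the only things to verify are the term-by-term positivity (routine, from $T \geq 0$, $\phi \geq 0$, $x \geq 0$) and the interchange of $\phi$ with the infinite sum, which is justified by norm convergence of the Neumann series and continuity of $\phi$.
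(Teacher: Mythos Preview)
Your proposal is correct and follows essentially the same route as the paper: expand the resolvent as the Neumann series (valid since $\abs{\lambda}>r(T)$), pass $\phi$ inside, apply the triangle inequality using $\phi(T^{n}x)\geq 0$, and re-sum to obtain $\phi(\RR(\abs{\lambda},T)x)$. Your additional remarks on justifying the interchange of $\phi$ with the sum and on $\abs{\lambda}\in\Res(T)$ only make the argument a bit more explicit than the paper's version.
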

\begin{proof}
Using representation of the resolvent $\RR( \lambda, T)$ of $T$ we obtain
\begin{equation*}
\begin{split}
	\abs*{\phi( \RR( \lambda, T) x ) }   
		&= \abs*{ \sum_{ k=0 }^{\infty} \, \lambda^{-(k + 1)} \phi(T^{ k }x)} \\
	\leq  \sum_{ k=0 }^{\infty} \abs*{\lambda}^{-(k + 1)} \phi(T^{ k }x) 
	 		&= \phi \left( \sum_{k=0}^{\infty} \, \abs{\lambda}^{-(k + 1)} \, T^{ k } x)\right) 
		= \phi( \RR( \abs{\lambda},T) x ).
\end{split}
\end{equation*}
\phantom{x}
\end{proof}
\begin{theorem}\label{thm:main-op}
Let $T$ be a positive operator on a \CA-algebra $\BA$.
\begin{enumerate}
\item
The spectral radius $ r(T) $ is an element of the spectrum $\Sp(T)$ of\/ $T$.
	
\item 
If\/ $\BA$ contains a unit $\1$, then there exists a 
$0 \leq \phi \in \BA^{*}$, $\norm{\phi} = 1$ 
such that $T'\phi = r(T) \phi$ where $T'$ denotes the adjoint operator of $T$ on the dual space $\BA^{*}$ of\/ $\BA$.
\end{enumerate}
\end{theorem}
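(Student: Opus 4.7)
For part (i), the plan is to argue by contradiction, using the Lemma's resolvent inequality to transfer information from the circle $\{\abs{\lambda}=r(T)\}$ to the real ray $(r(T),\infty)$. Assume $r(T)\in\Res(T)$. Since $\Sp(T)$ is a nonempty compact subset of $\{\abs{\lambda}\leq r(T)\}$, choose $\lambda_{0}\in\Sp(T)$ with $\abs{\lambda_{0}}=r(T)$ and a sequence $\lambda_{n}\in\Res(T)$, $\lambda_{n}\to\lambda_{0}$, with $\abs{\lambda_{n}}>r(T)$. The standard estimate $\norm{\RR(\lambda,T)}\geq 1/\mathrm{dist}(\lambda,\Sp(T))$ forces $\norm{\RR(\lambda_{n},T)}\to\infty$. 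On the other hand, $\abs{\lambda_{n}}\to r(T)\in\Res(T)$ by hypothesis, so the family $\RR(\abs{\lambda_{n}},T)$ remains bounded, and \eqref{eq:resolvent-inequality} gives a uniform bound on $\abs{\phi(\RR(\lambda_{n},T)x)}$ for all positive $\phi$ and $x$. Extending via the Jordan decomposition of hermitian functionals on $\BA$ and the decomposition of any element as a complex linear combination of four positives, this passes to arbitrary $\phi\in\BA^{*}$ and $x\in\BA$; two applications of the Banach--Steinhaus theorem then yield $\sup_{n}\norm{\RR(\lambda_{n},T)}<\infty$, contradicting the blow-up.

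For part (ii), with $r(T)\in\Sp(T)$ established, I would build the eigenfunctional as a weak-$*$ cluster point of normalized adjoint images. Take real $\lambda_{n}\downarrow r(T)$. For self-adjoint $x$ with $\norm{x}\leq 1$ one has $\pm x\leq\1$, and since $\RR(\lambda,T)=\sum_{k\geq 0}\lambda^{-(k+1)}T^{k}$ is a positive operator for $\lambda>r(T)$, it follows that $\norm{\RR(\lambda,T)x}\leq\norm{\RR(\lambda,T)\1}$; a complex-part decomposition then gives $\norm{\RR(\lambda,T)}\leq 2\norm{\RR(\lambda,T)\1}$, hence $\norm{\RR(\lambda_{n},T)\1}\to\infty$. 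Since the state space is norming on the positive cone of a unital \CA-algebra, choose states $\phi_{n}$ with $\phi_{n}(\RR(\lambda_{n},T)\1)\geq\tfrac{1}{2}\norm{\RR(\lambda_{n},T)\1}$, and define
\[
	\psi_{n} := \frac{\RR(\lambda_{n},T)'\phi_{n}}{\phi_{n}(\RR(\lambda_{n},T)\1)}.
\]
Each $\psi_{n}$ is positive (since $\RR(\lambda_{n},T)$ is) and satisfies $\psi_{n}(\1)=1$, hence is a state. The identity $(\lambda_{n}-T')\RR(\lambda_{n},T)'\phi_{n}=\phi_{n}$ yields $(\lambda_{n}-T')\psi_{n}=\phi_{n}/\phi_{n}(\RR(\lambda_{n},T)\1)\to 0$ in norm. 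By Banach--Alaoglu a subnet of $(\psi_{n})$ converges weak-$*$ to some $\psi$; weak-$*$ continuity of $T'$ together with $\lambda_{n}\to r(T)$ then gives $T'\psi=r(T)\psi$, and $\psi(\1)=1$ ensures $\norm{\psi}=1$.

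The main obstacle lies in (ii): ensuring the weak-$*$ cluster point is nonzero. This is handled by insisting each $\psi_{n}$ be a state, so that the normalization $\psi_{n}(\1)=1$ is preserved under weak-$*$ convergence; without the normalization trick, the natural candidate $\RR(\lambda_{n},T)'\phi_{0}$ could easily escape to $0$ after renormalization. In (i) the only subtle point is extending \eqref{eq:resolvent-inequality} from positive pairs to arbitrary ones, which relies on $\BA$ and $\BA^{*}$ being linearly spanned by their positive cones---precisely where the \CA-algebra structure enters in an essential way.
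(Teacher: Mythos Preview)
Your argument is correct and follows essentially the same approach as the paper. For (i) both proofs transfer the blow-up of $\norm{\RR(\lambda_{n},T)}$ at a peripheral spectral point to the real point $r(T)$ via the resolvent inequality and the fact that the positive cones of $\BA$ and $\BA^{*}$ are generating; you phrase this as a contradiction while the paper states it directly, but the content is identical. For (ii) both proofs produce approximate eigenstates as normalized adjoint-resolvent images and extract a weak-$*$ cluster point in $\SA$; the only technical difference is that the paper invokes the uniform boundedness principle to find a single positive $\phi$ with $\norm{\RR(\lambda_{n},T)'\phi}\to\infty$ and normalizes by this norm, whereas you exploit the order-unit estimate $\norm{\RR(\lambda,T)}\leq 2\norm{\RR(\lambda,T)\1}$ and pick a different near-norming state $\phi_{n}$ at each step---a slightly more hands-on route to the same construction.
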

\begin{proof}
{\upshape (i)}\quad
Take  $\alpha \in \Sp(T)$ with $\abs{\alpha} = r(T)$ and a sequence $(\lambda_{n})$ in the resolvent set $\Res(T)$ such that $\abs{\lambda_{n}} \downarrow  r(T)$ and $\lim_{n} \lambda_{n} = \alpha$.
Then, since $\alpha$ is a singularity of the resolvent of $T$, we conclude  
\[
	\limsup_{n} \norm*{ \RR( \lambda_{n}, T) } = \infty .
\]
Since the positive cones of $\BA$ and $\BA{*}$ are generating, there exists, by the uniform boundedness principle, $ 0 \leq \phi \in \BA^{*}$ and $0 \leq x \in \BA$, both of norm $\leq 1$ such that
\[
	 \limsup_{n} \abs*{ \phi (\RR( \lambda_{n}, T) x ) } = \infty  ,  
\]
and by Equation~\eqref{eq:resolvent-inequality}, 
\[
	\limsup_{n} \norm*{ \RR( \abs{\lambda_{n}}, T) )} = \infty .
\]
This proves $r(T) \in \Sp(T)$.

{\upshape (ii)}\quad
Denote $ r \coloneqq r(T) $ and let $ (\lambda_{n}) $ be a real sequence $\lambda_{n} \downarrow r$ in $\Res(T)$ in such that
\[
	\limsup_{n} \norm*{\RR( \lambda_{n}, T)} = \infty   .
\]
Again by the uniform boundedness principle and because the positive cone of $\BA^{*}$ is generating, there exists a positive linear form $\phi$ on $\BA$ such that 
\[
    \limsup_{n} \norm{ \RR( \lambda_{n} , T') \phi } = \infty .
\]
Since the resolvents $ \RR( \lambda_{n} , T') =  \RR( \lambda_{n} , T)' $ are positive operators on $\BA^{*}$, the linear forms
\[
    \phi_{n} = \norm{ \RR( \lambda_{n} , T') \phi }^{ -1 } \, 
    	\RR( \lambda_{n} , T') \phi \, ,
\]
are positive with $ \norm{\phi_{n}} = \phi_{n}(\1) = 1 $, hence are elements of the state space $\SA$ of $\BA$.

Since the state space $\SA$ is compact for the $\sigma(\BA^{*}, \BA)$-topology, this sequence has an accumulation point $ \phi_{0} $ with $ \phi_{0}(\1)= 1$. 
By compactness we find an ultrafilter $ \mathfrak{U} $ on $\N$ such that
\[
    \lim_{\mathfrak{U}} \phi_{n} = \phi_{0} .
\]
Since 
\begin{equation}\label{eq:convergence}
\begin{split}
	 (r - T')\phi_{n} & = \norm{ \RR( \lambda_{n} , T') \phi }^{ -1 }
	 	[ ( r - \lambda_{n}) + (\lambda_{n} - T) ]
	 		\RR( \lambda_{n} , T') \phi \\
	 	& = (r - \lambda_{n})\phi_{n} + \norm{ \RR( \lambda_{n} , T')	             
			\phi }^{ -1 } \phi 
\end{split}
\end{equation}
and since the adjoint operator $T'$ is continuous for the $\sigma(\BA^{*}, \BA)$-topology on $\BA^{*}$, we obtain from Equation~\eqref{eq:convergence} above that
\[
    (r - T') \phi_{0} = \lim_{ \mathfrak{U} } ((r - T')  \phi_{n} ) = 0  .
\]
Therefore $ T' \phi_{ 0 } = r(T)  \phi_{ 0 } $ with $\phi_{0} \in \SA$.
\end{proof}
\begin{corollary}\label{cor:r-pole}
If $ r(T) $ is a pole of the resolvent, then its order is maximal among the poles in the peripheral spectrum 
$\{ \alpha \in \sigma(T) \colon \abs{ \alpha}= r(T) \} $ of $T$.
\end{corollary}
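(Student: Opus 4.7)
The approach is to compare the Laurent expansions of the resolvent at $r := r(T)$ and at a peripheral pole $\alpha$ (with $\abs{\alpha} = r$), using the inequality from the preceding Lemma along a coordinated radial approach from outside both poles.

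Suppose $r$ is a pole of order $k \geq 1$, and let $\alpha \in \Sp(T)$ with $\abs{\alpha} = r$ be a pole of $\RR(\cdot, T)$ of order $m$. Write the Laurent expansions
\[
\RR(\lambda, T) = \sum_{n \geq -k} (\lambda - r)^n A_n \quad\text{near } r,\qquad \RR(\lambda, T) = \sum_{n \geq -m} (\lambda - \alpha)^n B_n \quad\text{near } \alpha,
\]
with $A_{-k} \neq 0$ and $B_{-m} \neq 0$. For small $\epsilon > 0$ set $\lambda_{\epsilon} := (1+\epsilon)\alpha$; then $\abs{\lambda_{\epsilon}} = (1+\epsilon)\, r$, so $\lambda_{\epsilon} - \alpha = \epsilon\alpha$ and $\abs{\lambda_{\epsilon}} - r = \epsilon\, r$, and for $\epsilon$ small both $\lambda_{\epsilon}$ and $\abs{\lambda_{\epsilon}}$ lie in $\Res(T)$ since $\alpha$ and $r$ are isolated in $\Sp(T)$.

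The Lemma then yields, for every $0 \leq \phi \in \BA^{*}$ and $0 \leq x \in \BA$,
\[
\abs*{\phi(\RR(\lambda_{\epsilon}, T) x)} \leq \phi(\RR(\abs{\lambda_{\epsilon}}, T) x).
\]
Multiplying both sides by $\epsilon^{m}$ and letting $\epsilon \to 0^{+}$, the Laurent expansion at $\alpha$ gives
\[
\epsilon^{m}\, \phi(\RR(\lambda_{\epsilon}, T) x) \longrightarrow \alpha^{-m}\, \phi(B_{-m} x),
\]
while the Laurent expansion at $r$ gives $\epsilon^{m}\, \phi(\RR(\abs{\lambda_{\epsilon}}, T) x) = O(\epsilon^{m-k})$. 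If $m > k$, the latter tends to $0$, so after taking moduli in the displayed inequality one obtains $\phi(B_{-m} x) = 0$ for all positive $\phi$ and $x$. Since the positive cones of $\BA$ and $\BA^{*}$ are generating, this forces $B_{-m} = 0$, contradicting the order $m$ of the pole at $\alpha$. Hence $m \leq k$.

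The main subtlety is the coupled parameterization: by choosing $\lambda_{\epsilon} = (1+\epsilon)\alpha$ one synchronizes the radial approach to $\alpha$ with that to $r$, so that the pole-order comparison drops out of a single limit computation via the Lemma. All remaining ingredients (Laurent expansion near an isolated pole, generation of the positive cone in a \CA-algebra) are standard.
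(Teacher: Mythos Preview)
Your proof is correct and follows essentially the same route as the paper: approach the peripheral pole $\alpha$ radially via $\lambda_{\epsilon}=(1+\epsilon)\alpha$, apply the preceding Lemma so that the behavior at $\alpha$ is dominated by the behavior of $\RR(\cdot,T)$ at $r$ along $|\lambda_{\epsilon}|=(1+\epsilon)r$, and compare pole orders. The paper phrases the same idea slightly differently---it bounds $|(\lambda-\alpha)^{m}\phi(\RR(\lambda,T)x)|$ by $(\mu-r)^{m}\norm{\RR(\mu,T)}$ and then invokes the uniform boundedness principle together with generation of the positive cones---whereas you work pointwise with the Laurent coefficients and conclude $B_{-m}=0$ directly; this is a cosmetic difference, not a genuinely different argument. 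One very minor remark: you do not need the isolation of $\alpha$ and $r$ to place $\lambda_{\epsilon}$ and $|\lambda_{\epsilon}|$ in $\Res(T)$, since $|\lambda_{\epsilon}|>r(T)$ already guarantees this for every $\epsilon>0$.
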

\begin{proof}
Let $r = r(T) $, $ \alpha $ in the peripheral spectrum of $T$ be a pole of the resolvent of order $ k $ and let $ \lambda = \mu \alpha $ where $ \mu > r$.
Then for $ 0 \leq \phi \in \BA^{*} $, $ 0 \leq x \in \BA $, both of norm $\leq 1$,  and $m > k$ we obtain by Equation~\eqref{eq:resolvent-inequality} on page~\pageref{eq:resolvent-inequality}
\[
	\abs*{ ( \lambda - \alpha )^{ m }\phi(( \RR( \lambda, T)x) }
		\leq
			(\mu - r )^{ m } \norm{ \RR( \mu, T) }  \, .
\]
Again by the uniform boundedness principle and the generating properties of the positive cones in $\BA$ and $\BA^{*}$ this shows that the pole order of $ r(T) $ is maximal among the poles in the peripheral spectrum.
\end{proof}

\section{Positive Semigroups}
This section establishes the integral representation of the resolvent of the generator $A$ of a positive $C_{0}$-semigroup on both \CA-algebras and the preduals of \WA-algebras.
We extend the standard integral representation, valid for $ \Re( \lambda ) > \omega_{0}$, to the larger domain $ \Re( \lambda) > \sA $.
The main result is a unified theorem with a single proof that covers both settings.
\begin{theorem}\label{thm:resolvent-eq}
Let $E$ be a \CA-algebra or the predual of a \WA-algebra and let $A$ be the generator of a positive $C_{0}$-semigroup $(T(t))_{t\geq0}$ on $E$.
Then for all\/ $ \lambda \in \C $, $ \Re(\lambda) > \sA$ and $ x \in E$ we have
\begin{equation}\label{eq:resolvent}
	\RR( \lambda,A )x = \lim_{t \to \infty} \int_{0}^{t} \eu^{- \lambda s} T(s)x \ds .
\end{equation}
\end{theorem}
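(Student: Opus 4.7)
The plan is to treat real $\lambda > \sA$ and positive $x$ first, then to reduce the complex case to it via a Laplace-integral analog of the inequality in the preceding Lemma; the positive cone of $E$ is generating in both settings, so it suffices to work with $x \geq 0$. For real $\lambda > \sA$ and $0 \leq x \in E$, I first observe that $\RR(\lambda, A)$ is positive: positivity of $\RR(\mu, A)$ for $\mu > \omega_{0}$ is immediate from the norm-convergent Bochner integral, and it propagates along $(\sA, \mu]$ via the Neumann series $\RR(\lambda, A) = \sum_{n \geq 0}(\mu - \lambda)^{n}\RR(\mu, A)^{n+1}$. Applying $(\lambda - A)$ to $u_{t} := \int_{0}^{t}\eu^{-\lambda s}T(s)x\,\ds$ and integrating by parts gives
\[
	u_{t} = \RR(\lambda, A)x - \eu^{-\lambda t}T(t)\RR(\lambda, A)x,
\]
so $v_{t} := \RR(\lambda, A)x - u_{t}$ is a decreasing net of positive elements dominated by $\RR(\lambda, A)x$, and the task is to show $\norm{v_{t}} \to 0$.

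The scalar input is a Pringsheim-type theorem for Laplace transforms of nonnegative functions: for any positive $\phi \in E^{*}$ the function $g_{\phi}(s) := \phi(T(s)x) \geq 0$ has Laplace transform whose analytic extension is $\mu \mapsto \phi(\RR(\mu, A)x)$, holomorphic on $\Res(A) \supset (\sA, \infty)$; hence its abscissa of convergence is at most $\sA$, and consequently $\phi(v_{t}) \to 0$ for every positive $\phi$. To upgrade this pointwise decay to $\norm{v_{t}} \to 0$, I use that in both settings the norm of a positive element equals $\sup\{\phi(\cdot) : 0 \leq \phi \in E^{*},\ \norm{\phi} \leq 1\}$ and that this positive part of the dual unit ball is $\sigma(E^{*}, E)$-compact. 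If $L := \lim_{t}\norm{v_{t}}$ were strictly positive, one could choose positive $\phi_{t}$ with $\phi_{t}(v_{t}) \geq L/2$ and extract a $\sigma(E^{*}, E)$-convergent subnet $\phi_{t_{\alpha}} \to \phi_{\infty}$; monotonicity of $(v_{t})$ then forces $\phi_{\infty}(v_{s}) \geq L/2$ for every fixed $s$, contradicting the Pringsheim conclusion applied to $\phi_{\infty}$.

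For complex $\lambda = \alpha + \iu\beta$ with $\alpha > \sA$, the Laplace analog of Equation~\eqref{eq:resolvent-inequality} yields $\abs{\phi(u_{t'}^{\lambda} - u_{t}^{\lambda})} \leq \phi(u_{t'}^{\alpha} - u_{t}^{\alpha})$ for every positive $\phi$; decomposing an arbitrary $\phi \in E^{*}$ into at most four positive functionals of norm bounded by a constant times $\norm{\phi}$ then gives $\norm{u_{t'}^{\lambda} - u_{t}^{\lambda}} \leq C\norm{u_{t'}^{\alpha} - u_{t}^{\alpha}}$, so the real case produces a norm Cauchy sequence; its limit must equal $\RR(\lambda, A)x$ by pairing with positive $\phi$ and reusing the scalar Pringsheim identity.

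The main obstacle I expect is the passage from the pointwise Pringsheim convergence $\phi(v_{t}) \to 0$ to the norm convergence $\norm{v_{t}} \to 0$, which is precisely where the \CA- and \WA-structure enters: through the $\sigma(E^{*}, E)$-compactness of the positive part of the dual unit ball together with the fact that the norm of a positive element is tested against positive functionals of norm at most one.
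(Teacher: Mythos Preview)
Your proposal is correct and follows essentially the same two-step scheme as the paper: first obtain the result for real $\lambda>\sA$ and $x\geq 0$ via a Pringsheim-type argument (this is exactly what the reference \cite[C-III, Theorem 1.2]{nagel-ln:2025} supplies), then upgrade to complex $\lambda$ by a Cauchy estimate that dominates the oscillatory integral by the real one, using that every functional decomposes into at most four positive ones with controlled norm. The only notable difference is in the upgrade from pointwise to norm convergence: the paper views the maps $x'\mapsto\langle u_t,x'\rangle$ as a monotone net in $C(K)$, where $K=\{0\leq x'\in E^{*}:\norm{x'}\leq 1\}$ is $\sigma(E^{*},E)$-compact, and invokes Dini's theorem directly; your subnet-plus-monotonicity argument is precisely a hand-rolled proof of Dini, so you may as well cite it.
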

\begin{proof}
For $ \lambda_{0} > \omega_{0}$ , $ \sA < \mu < \lambda_{0}$, $0 \leq x \in E$ and $ 0 \leq x' \in E'$.
As in the proof of \cite[C-III, Theorem 1.2]{nagel-ln:2025}, we obtain
\begin{equation}\label{eq:resolvent-eq}
	\langle \RR(\mu,A)x , x' \rangle =
		\lim_{t \to \infty} \langle \int_{0}^{t} \eu^{- \mu s} T(s)x \ds , x' \rangle .
\end{equation}
For $ t \geq 0 $ let $f_{t}$ and $f$ be defined on the $ \sigma(E',E)$-compact set 
\[
	K := \{ 0 \leq x' \colon x' \in E', \norm{x'} \leq 1 \}
\]
by
\[
	f_{t} \colon x' \mapsto  \langle \int_{0}^{t} \eu^{- \mu s} T(s)x \ds , x' \rangle 
	\quad \text{and} \quad
	 f  \colon x' \mapsto \langle \RR(\mu,A)x , x' \rangle .
\]
Then these functions are elements of $ C(K) $.
Thus the net $ (f_{t})_{t}$ is pointwise convergent to $f$ and monotone increasing, hence it is uniformly convergent in $C(K)$ by Dini's theorem. 
Since every $x' \in E'$ is a linear combination of at most four positive elements of $E'$, we have
\begin{equation}\label{eq:resolvent-strong}
	 \RR(\mu,A)x  =
		\lim_{t \to \infty}  \int_{0}^{t} \eu^{- \mu s} T(s)x \ds  .
\end{equation}
in the norm of $E$.

If $x \in E$ and $x' \in E'$, then there exist  positive $x_{j}  \in E$ (respectively   
positive $x'_{j} \in E'$) for $ j \in \{ 0, \ldots , 3 \}$ such that
\[
	x = \sum_{j=0}^{3} \iu^{j}x_{j}
	\quad \text{resp.} \quad
	x' = \sum_{j=0}^{3} \iu^{j}x'_{j}
\]
We define the positive elements $\abs{x}$ and $\abs{x'}$ by 
\[
	\abs{x} = \sum_{j=0}^{3} x_{j}
	\quad \text{resp.} \quad
	\abs{x'} = \sum_{j=0}^{3} x'_{j}
\]
and remark, that 
\[
	\abs{ \langle T(s)x , x' \rangle} \leq \langle T(s) \abs{x} , \abs{x'} \rangle  
\]
for $ s \geq 0 $.

Now we argue as in the second part of the proof of \cite[C-III, Theorem 1.2]{nagel-ln:2025}.
Take $ \lambda \in \C$ such that $ \Re(\lambda) > \sA$, then $ \lambda = \mu + \iu \gamma$ with $ \sA < \mu$ and $ \gamma \in \R$.

If $x \in E$, $ x' \in E'$ and $ 0 \leq r < t < \infty$, then 
\begin{equation}
	\begin{split}
		\abs*{ \left\langle \int_{r}^{t} \eu^{- \lambda s } T(s)x \ds , x' \right\rangle}
		&\leq
		\int_{r}^{t} \eu^{- \Re(\lambda) s } \abs{ \langle T(s)x \ds , x' \rangle } \\
		&\leq
		\int_{r}^{t} \eu^{- \mu s }   \langle T(s)\abs{x} \ds , \abs{x'}  \rangle .
	\end{split}
\end{equation}
But there exists $ m > 0$ such that $ \norm{\, \abs{ x'} \,} \leq m $ for every $ x' \in E' $ with $ \norm{x'} \leq 1$.
Therefore
\begin{equation}
	\begin{split}
		\norm*{  \int_{r}^{t} \eu^{- \lambda s } T(s)x \ds }
		&=
		\sup_{\norm{x'}\leq 1} \,  
		\abs*{ \left\langle \int_{r}^{t} \eu^{- \Re(\lambda) s } T(s)x \ds , x' \right\rangle } \\
		&\leq 
		m \, \norm*{ \int_{r}^{t} \eu^{- \mu s }  T(s)\abs{x} \ds } 
	\end{split}
\end{equation}
which shows that for every $x\in E$
\[
	\lim_{t \to \infty}  \int_{0}^{t} \eu^{- \lambda s} T(s)x \ds  
\]
exits and
\[
	\RR(\lambda,A)x = \int_{0}^{\infty} \eu^{- \lambda s} T(s)x \ds  
\]
by \cite[A-I, Prop. 1.11]{nagel-ln:2025}.
\end{proof}
The next corollary is an immediate consequence of Theorem~\ref{thm:resolvent-eq} and should be compared with Theorem~\ref{thm:main-op} on page~\pageref{thm:main-op}.
\begin{corollary}
Let $E$ be a \CA-algebra or the predual of a \WA-algebra and let $A$ be the generator of a positive $C_{0}$-semigroup $(T(t))_{t\geq0}$ on $E$.
For all $ 0 \leq x' \in E'$, $0 \leq x \in E$ and $ \Re(\lambda) > \sA$ we have
\begin{equation}\label{eq:resolvent-inequality-c0}
	\abs*{ \langle \RR( \lambda, A ) x , x' \rangle }
			\leq \langle \RR( \Re(\lambda),A)x , x' \rangle .
\end{equation}
\end{corollary}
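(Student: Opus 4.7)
The plan is to deduce the inequality directly from the integral representation established in Theorem~\ref{thm:resolvent-eq}. Since $\Re(\lambda) > \sA$, I may apply that theorem both to $\lambda$ itself and to the real number $\mu := \Re(\lambda)$ (which also satisfies $\mu > \sA$).

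First, I pair the integral representation with $x'$ to obtain
\[
    \langle \RR(\lambda,A)x, x'\rangle
    = \lim_{t\to\infty} \int_{0}^{t} \eu^{-\lambda s}\,\langle T(s)x, x'\rangle \ds,
\]
and similarly for $\mu$ in place of $\lambda$. Because $x \geq 0$, $x' \geq 0$, and $T(s)$ is a positive operator, the scalar function $s \mapsto \langle T(s)x, x'\rangle$ is nonnegative on $[0,\infty)$. Hence, for each finite $t \geq 0$, the triangle inequality for the Bochner (here in fact Lebesgue) integral together with $\abs{\eu^{-\lambda s}} = \eu^{-\mu s}$ yields
\[
    \abs*{\int_{0}^{t} \eu^{-\lambda s}\,\langle T(s)x, x'\rangle \ds}
    \leq \int_{0}^{t} \eu^{-\mu s}\,\langle T(s)x, x'\rangle \ds.
\]

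Passing to the limit $t \to \infty$ on both sides (the right-hand side converges by Theorem~\ref{thm:resolvent-eq} applied at $\mu$, and the left-hand side converges to $\langle \RR(\lambda,A)x, x'\rangle$ by the same theorem applied at $\lambda$), and using continuity of the modulus, gives
\[
    \abs*{\langle \RR(\lambda,A)x, x'\rangle}
    \leq \langle \RR(\mu,A)x, x'\rangle
    = \langle \RR(\Re(\lambda),A)x, x'\rangle,
\]
which is the claimed inequality~\eqref{eq:resolvent-inequality-c0}.

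There is no real obstacle here: the work has already been done in Theorem~\ref{thm:resolvent-eq}, which extends the integral formula from $\Re(\lambda) > \omega_0$ to the strictly larger half-plane $\Re(\lambda) > \sA$. The only point worth noting is that applying the estimate at finite $t$ before taking the limit avoids having to justify the inequality for a possibly non-absolutely convergent improper integral on the left.
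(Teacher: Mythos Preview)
Your argument is correct and is exactly what the paper intends: it states that the corollary ``is an immediate consequence of Theorem~\ref{thm:resolvent-eq}'' and gives no further details, and your proof simply spells out that immediate consequence. There is nothing to add.
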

Now, we are able to prove our main result.
\begin{theorem}\label{thm:main-sg}
Let $(T(t))_{t\geq0}$ be a $C_{0}$-semigroup of positive operators on a \CA-algebra $\BA$.
\begin{enumerate}

\item
Then $ \sA = \omega_{0}$.

\item
If\/ $\Sp(A) \neq \emptyset$, \ie $- \infty < \sA$, then $\sA \in \Sp(A)$.

\item 
If\/ $\BA$ has a unit $\1$, then $\Sp(A) \neq \emptyset$ and there exists a state $\phi \in \BA^{*}$ such that $A'\phi = \sA \phi$ for the adjoint operator of\/ $A$.
	
\end{enumerate}
\end{theorem}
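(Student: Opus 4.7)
Mimicking the proof of Theorem~\ref{thm:main-op}(i), with $\sA$ replacing the spectral radius and the inequality~\eqref{eq:resolvent-inequality-c0} replacing~\eqref{eq:resolvent-inequality}: assuming $\Sp(A)\neq\emptyset$, so $\sA\in\R$, I pick $\alpha_{n}\in\Sp(A)$ with $\Re(\alpha_{n})\to \sA$ and set $\lambda_{n}:=\sA+\tfrac{1}{n}+\iu\operatorname{Im}(\alpha_{n})\in\Res(A)$; since $\abs{\lambda_{n}-\alpha_{n}}\to 0$ we have $\norm{\RR(\lambda_{n},A)}\to\infty$. The uniform boundedness principle, combined with the generating property of the positive cones of $\BA$ and $\BA^{*}$, produces positive $x\in\BA$ and $\phi\in\BA^{*}$ of norm $\leq 1$ with $\limsup_{n}\abs{\phi(\RR(\lambda_{n},A)x)}=\infty$. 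Applying~\eqref{eq:resolvent-inequality-c0} yields $\limsup_{n}\phi(\RR(\sA+\tfrac{1}{n},A)x)=\infty$, hence $\norm{\RR(\sA+\tfrac{1}{n},A)}\to\infty$, which forces $\sA\in\Sp(A)$.

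\textbf{Part (iii).}
For unital $\BA$ I apply Theorem~\ref{thm:main-op}(ii) to the bounded positive operator $\RR(\mu,A)$ at some real $\mu>\sA$, producing a state $\phi_{0}$ with $\RR(\mu,A)'\phi_{0}=r(\RR(\mu,A))\phi_{0}$. That $\Sp(A)\neq\emptyset$ follows because otherwise $\Sp(\RR(\mu,A))=\{0\}$, whence $r(\RR(\mu,A))=0$ and $\RR(\mu,A)'\phi_{0}=0$; but then $\phi_{0}$ would vanish on the dense subspace $\RR(\mu,A)\BA=D(A)$, contradicting $\phi_{0}(\1)=1$. Having $\sA\in\Sp(A)$ from part~(ii), the identity $\Sp(\RR(\mu,A))\setminus\{0\}=\{\frac{1}{\mu-\lambda}:\lambda\in\Sp(A)\}$ together with $\abs{\mu-\lambda}\geq \mu-\Re(\lambda)\geq \mu-\sA$ (with equality at $\lambda=\sA$) gives $r(\RR(\mu,A))=\frac{1}{\mu-\sA}$; inverting via $(\mu-A')=(\RR(\mu,A)')^{-1}$ turns $\RR(\mu,A)'\phi_{0}=(\mu-\sA)^{-1}\phi_{0}$ into $A'\phi_{0}=\sA\phi_{0}$.

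\textbf{Part (i) and the main obstacle.}
The inequality $\sA\leq\omega_{0}$ is standard; the work lies in $\omega_{0}\leq\mu$ for each real $\mu>\sA$. Strong continuity of the orbit $T(\cdot)\1$ at $s=0$ yields $\delta>0$ with $T(s)\1\geq \tfrac{1}{2}\1$ for $s\in[0,\delta]$, so
\[
\RR(\mu,A)\1 \;=\; \int_{0}^{\infty}\eu^{-\mu s}T(s)\1\ds \;\geq\; \tfrac{1}{2}\!\int_{0}^{\delta}\eu^{-\mu s}\ds\cdot\1 \;=:\; c\,\1, \qquad c>0.
\]
Positivity of $T(t)$, the commutation $T(t)\RR(\mu,A)=\RR(\mu,A)T(t)$, and Theorem~\ref{thm:resolvent-eq} applied to $T(t)\1$ combine to give, in the positive cone,
\[
c\,T(t)\1 \;\leq\; T(t)\RR(\mu,A)\1 \;=\; \RR(\mu,A)T(t)\1 \;=\; \eu^{\mu t}\!\int_{t}^{\infty}\eu^{-\mu s}T(s)\1\ds \;\leq\; \eu^{\mu t}\,\RR(\mu,A)\1.
\]
Monotonicity of the C*-norm on positives, together with the standard estimate $\norm{T(t)}\leq 2\norm{T(t)\1}$ for positive linear maps on a unital C*-algebra, yields $\norm{T(t)}\leq (2/c)\norm{\RR(\mu,A)\1}\eu^{\mu t}$, hence $\omega_{0}\leq\mu$. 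The main obstacle is the non-unital case, since the argument exploits $\1$ decisively (both for the lower bound $\RR(\mu,A)\1\geq c\,\1$ and for $\norm{T(t)}\sim\norm{T(t)\1}$); it is transferred either to the unitization $\widetilde{\BA}$ (to which $T$ extends as a positive semigroup) or carried out directly via an approximate unit $(e_{\lambda})\subset\BA$, requiring analogous estimates $\RR(\mu,A)e_{\lambda}\geq c_{\lambda}\,e_{\lambda}$ with uniform control as $\lambda\to\infty$.
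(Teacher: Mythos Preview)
Your treatment of (ii) is essentially the paper's argument. Your route to (iii) through the resolvent $\RR(\mu,A)$ is genuinely different from the paper's: the paper applies Theorem~\ref{thm:main-op}(ii) to each $T(2^{-n})$ and extracts a common eigenstate by a nested-compactness argument on the dyadic times, whereas you apply Theorem~\ref{thm:main-op}(ii) once to $\RR(\mu,A)$ and invoke the spectral mapping $r(\RR(\mu,A))=(\mu-\sA)^{-1}$. Your version is correct and arguably cleaner; the paper's version has the minor advantage of making the link $T(t)'\phi=\eu^{\omega_{0}t}\phi$ explicit, which it then uses to re-derive $\sA=\omega_{0}$ in the unital case.

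The real gap is in (i) for non-unital $\BA$, and neither of your proposed fixes works. The parenthetical claim that $T$ ``extends as a positive semigroup'' to the unitization is false in general. Take $\BA=C_{0}((0,1))$ and $T(t)f=\eu^{t}f$; with $f(s)=-s(1-s)$ and $\alpha=\tfrac14$ one has $f+\alpha\1\geq 0$ in $\widetilde\BA$, but $\widetilde T(t)(f+\alpha\1)=\eu^{t}f+\alpha\1$ is negative at $s=\tfrac12$ for every $t>0$. Rescaling the unit component to $\eu^{\omega t}\1$ restores positivity only for $\omega\geq\omega_{0}$, after which $\Sp(\widetilde A)=\Sp(A)\cup\{\omega\}$ and $\omega_{0}(\widetilde T)=\omega$, so the unital result yields only the triviality $s(A)\leq\omega$. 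The approximate-unit alternative has the same defect: an inequality $\RR(\mu,A)e_{\lambda}\geq c\,e_{\lambda}$ with $c$ independent of $\lambda$ is precisely a uniform bound on the growth of $T(t)$, which is what you are trying to prove.

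The paper bypasses the unit entirely by passing to the dual. The structural fact that replaces your use of $\1$ is that the norm on $\BA^{*}$ is \emph{additive on the positive cone}: $\norm{\psi_{1}+\psi_{2}}=\norm{\psi_{1}}+\norm{\psi_{2}}$ for $\psi_{1},\psi_{2}\geq 0$. After rescaling to $\omega_{0}=0$ and assuming $\sA<0$, the order inequality $\int_{0}^{t}T(s)'\phi\,\mathrm{d}s\leq \RR(0,A)'\phi$ (for $0\leq\phi\in\BA^{*}$) combined with additivity gives
\[
\int_{0}^{t}\norm{T(s)'\phi}\,\mathrm{d}s=\norm*{\int_{0}^{t}T(s)'\phi\,\mathrm{d}s}\leq\norm{\RR(0,A)'\phi},
\]
hence $\int_{0}^{\infty}\norm{T(s)'\phi}\,\mathrm{d}s<\infty$ for every $\phi$, and Datko--Pazy yields $\norm{T(t)}=\norm{T(t)'}\to 0$, contradicting $\omega_{0}=0$. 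This is the missing idea: the dual norm's additivity on positives is the order-theoretic substitute for a unit that survives in every \CA-algebra.
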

\begin{proof}
{(Proof of \upshape(i)})\quad
By rescaling  we take $\omega_{0}=0$ and we assume $\sA < 0$.

If $\phi \in \BA^{*}$, then the function
\[
	t \mapsto \|T'(t)\phi\| = \sup_{\|x\| \leq 1} \abs{\phi(T(t)x)}
\]
is lower semicontinuous and hence measurable on $\R_{+}$, which justifies the integrals below.
Furthermore, $ T'(t)\phi \in D(A')$ and the semigroup  $\{T(t)'\}$ is strongly continuous on $D(A')$ (\cite[A-I, 3.4]{nagel-ln:2025}).

Next let $ 0 \leq \phi \in \BA^{*}$ and $ 0 \leq x \in \BA$.
Then
\begin{equation}
	\begin{split}
		\left\langle \int_{0}^{t} T(s)' \phi \ds , x \right\rangle 
		&=
		\phi ( \int_{0}^{t} T(s) x \ds ) \\
		\leq \phi ( \RR(0,A) x \ds )
		&=
		\langle \RR(0,A)' \phi , x \rangle 
	\end{split}
\end{equation}
hence
\begin{equation}\label{eq:tslessreolvent}
		\int_{0}^{t} T(s)' \phi \ds \leq \RR(0,A)' \phi .
\end{equation}
Since the norm of $\BA'$ is additive on the positive cone of $\BA$, we obtain from 
Equation~\eqref{eq:tslessreolvent}
\begin{equation}
 \begin{split}
 		\int_{0}^{t} \norm*{ T(s)' \phi \ds } 
 		&=
		\norm*{ \int_{0}^{t} T(s)' \phi \ds }  \\
 		\leq \norm{ \RR(0,A)' \phi } & \leq \norm{ \RR(0,A)} \norm{\phi} \leq m_{\phi}.
 \end{split}
\end{equation}
for some $ m_{\phi} > 0 $ independent of\/ $t \in \R_{+}$.

Now, every $\phi \in \BA^{*}$ is a linear combination of at most four positive linear forms.
Thus for some $\gamma > 0$
\[
	\int_{0}^{t}\|T(s)'\phi\| \ds \leq \gamma  
\]
for all $t \in \R_{+}$ and therefore
\[
	\int_{0}^{\infty}\|T(s)'\phi\| \ds \leq \gamma  .
\]
Now we argue as in the theorem of Datko-Pazy (\cite[Thm. V.1.8]{EN2000}) that
\[
	\lim_{t \to \infty} \|T(t)'\| = 0 .
\]
Because of $ \|T(t)'\| = \|T(t)\| $ this implies 
\[
	\lim_{t \to \infty} \|T(t)\| = 0 .
\]
Hence $\sA < 0$ implies $\omega_{0} < 0$ which implies $\sA = \omega_{0}$ by using the rescaling techniques described in \cite[A-I, 3.1]{nagel-ln:2025}.
\footnote{I have to thank W. Arendt for his valuable remarks concerning the proof of this part of the theorem.}

{(Proof of \upshape(ii))}\quad
We suppose $\Sp(A) \neq \emptyset$ (\ie $\sA > -\infty$) and assume $\sA \notin \Sp(A)$.
Then there exist $\epsilon > 0$ and $\alpha_{0}$, $\beta_{0} \in \R$ such that
\[
	\rointerval{\sA-\epsilon,\infty} \subset \Res(A), 
		\quad \mu_{0} \coloneqq \alpha_{0} + \iu \beta_{0} \in \Sp(A) 		
		\quad \text{and} \quad \alpha_{0} > \sA - \epsilon.
\]
Next take $ \mu_{0} $ as above, choose $ \alpha_{n} \downarrow \alpha_{0}$ and let $ \lambda_{n} = \alpha_{n} + \iu \beta_{0}$.
Then for all $ 0 \leq \phi \in \BA^{*}$ and $0 \leq x \in \BA$ of norm $\leq 1$  Equation~\eqref{eq:resolvent-inequality-c0} yields
\[
	\abs{ \phi (\RR(\lambda_{n},A)x) } \leq \norm{\RR( \alpha_{n}, A )} .
\]
By the generating properties of the positive cones of $\BA$ and $\BA^{*}$ and the uniform boundedness principle, there exist $0 \leq \phi \in \BA^{*}$ and $0 \leq x \in \BA$ such that the left side of this equation is unbounded and hence the right side as well. 
This contradicts our assumption and therefore $\sA \in \Sp(A)$.

{(Proof of \upshape(iii))}\quad
Since all operators $T(t)$ $(t > 0)$ are positive and our algebra has a unit, there exists by Theorem~\ref{thm:main-op} for each $t > 0$ a state $\phi_{t}$ such 
\[
	T(t)' \phi_{t} = r(T(t)) \phi_{t} = \eu^{\omega_{0} t} \phi_{t}  .
\]
For $n \in \N$ let
\[
	E_{n} := \{\phi \in \SA \colon T(2^{-n}){}' \phi = e^{\omega_{0} 2^{-n}} \phi\}.
\]
The sequence of\/ $ (E_{n}) $,  is monotonically decreasing and due to the 
$\sigma(\BA^{*}, \BA)$-compactness of the unit ball of\/ $\BA^{*}$ we have 
\[
	\phi \in \bigcap_{n \in \N} E_{n} 
	\quad \text{and} \quad
	\phi(\1) = 1.
\]
From the semigroup property $T(s + t) = T(s) T(t)$ $(s, t \in \R_+)$ and the $\sigma(\BA^{*}, \BA)$-continuity of the mapping $t \mapsto T(t)'$ from $\R_{+} $ into $\BA^{*}$, 
it follows that 
\[
	T(t)' \phi = \eu^{\omega_{0} t}  \quad (t > 0) .
\]
Suppose $\omega_{0} = -\infty$.
Then $T(t)'\phi = 0$, so $0 = \phi(T(t)\1)$ for all $t > 0$.
By continuity this means $1 = \phi(\1) = 0$, a contradiction that can only be resolved by $\omega_{0} > -\infty$.

Since $e^{\omega_{0} t}$ is an element of the point spectrum of each operator $T(t)'$, it is in the residual spectrum of $T(t)$ for all $(t > 0)$.
But then $\omega_{0} \in \Sp(A)$ which yields $\sA = \omega_{0}$ (see \cite[A-III, 6.5]{nagel-ln:2025}).
\end{proof}
\begin{corollary}
Suppose that $s(A)$ is a pole of the resolvent $\RR( \lambda , A)$.
Then its order is maximal for all other poles on the line $ \sA + \iu \R$.
\end{corollary}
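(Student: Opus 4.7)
The plan is to follow exactly the strategy of Corollary~\ref{cor:r-pole}, replacing the resolvent inequality \eqref{eq:resolvent-inequality} for bounded operators by its generator analogue \eqref{eq:resolvent-inequality-c0}. This is possible because the inequality \eqref{eq:resolvent-inequality-c0} plays the same structural role at the level of the generator that \eqref{eq:resolvent-inequality} played at the level of a single positive operator.

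First I would fix an arbitrary pole $\alpha = \sA + \iu \beta_{0}$ of $\RR(\cdot,A)$ on the critical line, of order $\ell$, and let $k$ denote the pole order at $\sA$. For real $\mu > \sA$ I set $\lambda = \mu + \iu \beta_{0}$, so that $\Re(\lambda) = \mu$. Then \eqref{eq:resolvent-inequality-c0} gives, for all $0 \leq \phi \in \BA^{*}$ and $0 \leq x \in \BA$ of norm $\leq 1$, the chain of bounds $(\mu - \sA)^{k}\,\abs{\phi(\RR(\lambda,A)x)} \leq (\mu - \sA)^{k}\,\phi(\RR(\mu,A)x) \leq (\mu - \sA)^{k}\,\norm{\RR(\mu,A)}$. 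The last expression stays bounded as $\mu \downarrow \sA$ precisely because $\sA$ is assumed to be a pole of order $k$.

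Next I would invoke the generating property of the positive cones in $\BA$ and $\BA^{*}$ together with the uniform boundedness principle, exactly as in the proofs of Theorem~\ref{thm:main-op} and Corollary~\ref{cor:r-pole}, to promote this pointwise estimate to a uniform bound on $(\mu - \sA)^{k}\,\norm{\RR(\lambda,A)}$ as $\mu \downarrow \sA$. The Laurent expansion of $\RR(\cdot,A)$ around $\alpha$ therefore cannot have a principal part of order exceeding $k$, and consequently $\ell \leq k$. Since $\alpha$ was an arbitrary pole on $\sA + \iu \R$, this gives the claimed maximality.

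I do not anticipate a serious obstacle: the only point to verify is that the inequality \eqref{eq:resolvent-inequality-c0} applies throughout the approach $\mu \downarrow \sA$, which is clear because the entire ray $\{\mu + \iu \beta_{0} : \mu > \sA\}$ lies in the half-plane $\Re(\lambda) > \sA$ on which \eqref{eq:resolvent-inequality-c0} is valid. Everything else is a transcription of the argument already used for Corollary~\ref{cor:r-pole}.
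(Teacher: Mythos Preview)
Your proposal is correct and follows essentially the same approach as the paper: apply the resolvent inequality \eqref{eq:resolvent-inequality-c0} along the horizontal approach $\lambda = \mu + \iu\beta_{0} \to \sA + \iu\beta_{0}$, multiply by the appropriate power of $\mu - \sA$, and use the generating cones plus uniform boundedness to pass from the pointwise estimate to an operator-norm bound, exactly mirroring Corollary~\ref{cor:r-pole}. The only cosmetic difference is that the paper first rescales to $\sA = 0$, whereas you keep $\sA$ general; the arguments are otherwise identical.
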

\begin{proof}
We can assume $\sA=0$ by rescaling the semigroup.
Suppose the order of the pole $\sA$ is $k$ and let $ \lambda = \alpha + \iu \beta $ for $\beta \in \R$ and $\alpha > 0$.
If $0 \leq \phi \in \BA^{*}$ and $ 0 \leq x \in \BA$ of norm $ \leq 1$, then for $ m > k $
\[
	\abs{\phi ( \lambda^{m} \RR( \lambda , A )x ))} \leq 
		  \alpha^{m} \norm{ \RR( \alpha , A )} 
\]
and the claim follows as in Corollary~\ref{cor:r-pole}.
\end{proof}
\begin{corollary}
Let $A$ be the generator of a strongly continuous group of positive operators on a \CA-algebra.
Then $ \sigma(A) \neq \emptyset$ and therefore $ \sA \in \Sp(A)$ and $\sA = \omega_{0}$.
\end{corollary}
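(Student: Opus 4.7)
The plan is to reduce the claim to Theorem~\ref{thm:main-sg}, for which it suffices to show $\Sp(A) \neq \emptyset$; parts (i) and (ii) of that theorem will then immediately give $\sA = \omega_{0}$ and $\sA \in \Sp(A)$.

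The key additional structure in the group case is that the backward family $S(t) \coloneqq T(-t)$, $t \geq 0$, is again a $C_{0}$-semigroup of positive operators on $\BA$ (positivity is part of the hypothesis that $(T(t))_{t \in \R}$ is a group of positive operators), and its generator is $-A$. I would argue by contradiction: assume $\Sp(A) = \emptyset$. Then $\Sp(-A) = -\Sp(A) = \emptyset$ as well, so both generators have spectral bound $-\infty$. Applying Theorem~\ref{thm:main-sg}(i) to each of the two positive semigroups $(T(t))_{t\geq 0}$ and $(S(t))_{t\geq 0}$ yields
\[
	\omega_{0}(T) = \mathrm{s}(A) = -\infty, \qquad \omega_{0}(S) = \mathrm{s}(-A) = -\infty,
\]
hence $\|T(t)\| \to 0$ and $\|T(-t)\| = \|S(t)\| \to 0$ as $t \to \infty$.

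This is incompatible with the group law: from $T(t)T(-t) = \Id$ on the non-zero Banach space $\BA$ one gets $1 = \|\Id\| \leq \|T(t)\|\,\|T(-t)\|$ for every $t \geq 0$, contradicting the simultaneous decay to $0$. Therefore $\Sp(A) \neq \emptyset$, and Theorem~\ref{thm:main-sg}(ii) together with (i) delivers $\sA \in \Sp(A)$ and $\sA = \omega_{0}$.

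The only delicate point is ensuring that Theorem~\ref{thm:main-sg}(i) is genuinely applicable to the backward semigroup; this hinges on positivity of $T(-t)$ for $t \geq 0$, which is given by the hypothesis (a group of positive operators), not merely by positivity of the forward part. Once that is observed, the contradiction from the group identity is the whole content of the argument, so I expect no real obstacle beyond stating it cleanly.
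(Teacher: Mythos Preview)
Your argument is correct, but it differs from the paper's. Both proofs begin identically: observe that $-A$ generates the positive $C_{0}$-semigroup $(T(-t))_{t\geq 0}$, assume $\Sp(A)=\emptyset$, and note $\mathrm{s}(A)=\mathrm{s}(-A)=-\infty$. From there you invoke Theorem~\ref{thm:main-sg}(i) as a black box to force $\omega_{0}(T)=\omega_{0}(S)=-\infty$ and contradict the group identity $T(t)T(-t)=\Id$. The paper instead stays at the resolvent level: it uses inequality~\eqref{eq:resolvent-inequality-c0} for both $A$ and $-A$ to bound $\{\RR(\lambda,A):\lambda\in\C\}$ uniformly, and then Liouville's theorem forces $\RR(\,\cdot\,,A)\equiv 0$, a contradiction. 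Your route is shorter and more elementary once Theorem~\ref{thm:main-sg}(i) is available; the paper's route is independent of that theorem and would survive in settings where only the resolvent inequality, not the full equality $\mathrm{s}(A)=\omega_{0}$, has been established. Either way the remaining conclusions $\sA\in\Sp(A)$ and $\sA=\omega_{0}$ follow from Theorem~\ref{thm:main-sg}.
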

\begin{proof}
$A$ and $-A$ are generators of positive $C_{0}$-semigroups on $\BA$ and suppose $ \Sp(A) = \emptyset$.
Then $ \sA = \mathrm{s}(-A) = - \infty$ and the set
\[
	\{ \RR( \lambda, A ) \colon \Re \lambda \geq 0 \}
	\quad \text{and} \quad
	\{ \RR( \lambda, - A ) \colon \Re \lambda \geq 0 \}
\]
is bounded in $\L{\BA}$ using Equation~\eqref{eq:resolvent-inequality-c0}, \ie the set $\{ \RR( \lambda, A ) \colon \lambda \in \C\} $ is bounded.
But then the function $ \lambda \to \RR( \lambda , A ) $ is constant by Liouville's theorem, hence identically zero because $ \lim_{ \lambda \to \infty} \RR( \lambda, A) = 0$.
But this is not possible, hence $ \Sp(A) \neq \emptyset$ and the claim follows.
\end{proof}

For the final proposition we use the fact that every \WA-algebra has a unit (\cite[Prop. 1.6.1]{sakai:1971}).
\begin{proposition}

\begin{enumerate}
\item
If $\BA$ is a \WA-algebra with predual $\BA_*$ and $ (T(t))_{t\geq0} $  a positive $C_{0}$-semigroup on $\BA_*$, then $\sA = \omega_{0}$.

\item
If $\BA$ is a \CA-algebra and  $ (T(t))_{t\geq0}$ a positive $C_{0}$-semigroup on the dual\/ $\BA^{*}$ of\/ $\BA$, then $\sA = \omega_{0}$.
\end{enumerate}

\end{proposition}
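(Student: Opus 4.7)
The plan is to reduce the proposition to a single argument by first noting that (ii) is a special case of (i): the second dual $\BA^{**}$ of a \CA-algebra is canonically a \WA-algebra whose predual is $\BA^{*}$, and positivity with respect to the $\BA$-order on $\BA^{*}$ coincides with positivity with respect to the $\BA^{**}$-order. Hence any positive $C_{0}$-semigroup on $\BA^{*}$ is, under this identification, a positive $C_{0}$-semigroup on the predual of a \WA-algebra, so (ii) follows once (i) is established.

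For (i), the strategy is to mimic the proof of Theorem~\ref{thm:main-sg}(i) but to apply the Datko--Pazy-style argument directly to the original semigroup $T(t)$ on $E \coloneqq \BA_{*}$, rather than to a dual semigroup. This is possible because $E$ itself has norm additive on its positive cone: every normal positive linear form $\phi$ on the unital \WA-algebra $\BA$ satisfies $\norm{\phi} = \phi(\1)$, so for $\phi_{1}, \phi_{2} \geq 0$ in $E$ we have $\norm{\phi_{1} + \phi_{2}} = \norm{\phi_{1}} + \norm{\phi_{2}}$.

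Concretely, I would proceed as follows. After rescaling so that $\omega_{0} = 0$, suppose for contradiction that $\sA < 0$; then $0 \in \Res(A)$. By Theorem~\ref{thm:resolvent-eq}, for $0 \leq x \in E$ the representation $\RR(0,A)x = \lim_{t \to \infty} \int_{0}^{t} T(s) x \ds$ holds in the norm of $E$. Since the integrands are positive and the positive cone of $E$ is norm-closed, the partial integrals increase monotonically to $\RR(0,A)x$, so $\int_{0}^{t} T(s)x \ds \leq \RR(0,A)x$ in $E$ for every $t$. Taking norms and invoking additivity on the positive cone gives
\[
	\int_{0}^{t} \norm{T(s)x} \ds = \norm*{\int_{0}^{t} T(s)x \ds} \leq \norm{\RR(0,A)}\,\norm{x}.
\]
Decomposing an arbitrary $x \in E$ into four positive parts (the positive cone of $E$ is generating) yields $\int_{0}^{\infty} \norm{T(s)x} \ds < \infty$ for all $x \in E$, and the Datko--Pazy theorem (\cite[Thm.~V.1.8]{EN2000}) then forces $\norm{T(t)} \to 0$ as $t \to \infty$. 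This contradicts $\omega_{0} = 0$, so $\sA \geq \omega_{0}$, and since always $\sA \leq \omega_{0}$, equality follows.

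The main obstacle is verifying that the norm is genuinely additive on the positive cone in both settings. For (i) this is immediate from $\norm{\phi} = \phi(\1)$ for normal positive $\phi \in \BA_{*}$; for (ii) it either follows from the reduction to $\BA^{**}$ above or, more elementarily, from $\norm{\phi} = \lim_{\alpha} \phi(e_{\alpha})$ for positive $\phi \in \BA^{*}$ and any approximate unit $(e_{\alpha})$ of $\BA$. Beyond this point the argument is a routine combination of Theorem~\ref{thm:resolvent-eq}, the monotonicity of the partial integrals, and the standard Datko--Pazy exponential-stability criterion, so I do not anticipate further substantive difficulty.
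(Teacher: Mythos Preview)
Your proposal is correct and follows essentially the same route as the paper: for (ii) you reduce to (i) via the bidual \WA-algebra, and for (i) you rescale to $\omega_{0}=0$, use Theorem~\ref{thm:resolvent-eq} together with the additivity of the norm on the positive cone of $\BA_{*}$ (equivalently, $\norm{\phi}=\phi(\1)$) to obtain $\int_{0}^{\infty}\norm{T(s)x}\ds<\infty$, and then invoke Datko--Pazy. The only cosmetic difference is that the paper evaluates directly against the unit $\1$ rather than phrasing the step via monotone partial integrals.
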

\begin{proof}
\begin{enumerate}[\upshape (i), wide, labelindent=0.0em]
\item
By rescaling we may assume $ \omega_{0} = 0 $ and suppose $ \sA < 0 $. 
Then $ \RR(0,A)$ is a positive operator and by Theorem~\ref{thm:resolvent-eq} given by
\[
	R(0,A) \phi = \int_{0}^{\infty} T(s) \phi \ds .
\]
If $ \phi $ is positive, then 
\[
	\norm{R(0,A) \phi} = \langle R(0,A) \phi, \1 \rangle
		= \langle \int_{0}^{\infty} T(s) \phi \ds , \1 \rangle
		= \int_{0}^{\infty} \norm{T(s) \phi} \ds .
\]
Again, this implies $ \int_{0}^{\infty} \norm{T(s) \phi} \ds < \infty $ for every $\phi \in \BA_{*}$.
From the Theorem of Datko-Pazy we obtain 
\[
	\lim_{t \to \infty} \|T(t)\| = 0 
\]
hence $\omega_{0} < 0$ which implies $\sA = \omega_{0}$.

\item
Since the bidual of a \CA-algebra is a \WA-algebra, the claim follows immediately.
\end{enumerate}

\end{proof}
\begin{remarks}\label{ex:remarks}
\begin{enumerate}

\item
There exist non-positive $C_{0}$-semigroups on \CA-algebras with $\sA < \omega_{0}$.
For this see \cite{zbmath03766522}.

\item
For examples of positive $C_{0}$-semigroups  on a (commutative) \CA-algebra without unit and $ -\infty = \sA = \omega_{0}$ or $ -\infty = \sA < \omega_{0} $ see \cite[B-III, Examples 1.2.(a) \& (b)]{nagel-ln:2025}.

\item
\cite[B-III, Examples 1.7]{nagel-ln:2025} shows, that a unit in $\BA$ is necessary for the existence of an eigenvector to the eigenvalue $\sA$ for $A'$.

\end{enumerate}
\end{remarks}
{\RaggedRight
\bibliographystyle{abbrvnat}
\bibliography{ams-pos}
}
\end{document}